\newcommand{\F}{\mathfrak{F}}
\newcommand{\G}{\mathfrak{G}}
\newcommand{\M}{\mathfrak{M}}
\newcommand{\N}{\mathfrak{N}}
\newcommand{\Z}{\mathbb{Z}}
\title{On One Application of Asphericity of Presentations}
\author{Maxim Zykin\\
  Faculty of Mechanics and Mathematics, Lomonosov Moscow State University\\
  119991 Moscow, Russia\\
  \texttt{maksim.zykin@math.msu.com}}
\date{}
\begin{document}

\maketitle

\begin{abstract}
I present a direct proof of Lemma~3(a) from O.\,V.~Kulikova’s work on torsion in the group $\F/[\M,\N]$, using only Proposition~1.2 of Chiswell--Collins--Huebschmann on combinatorially aspherical presentations.  In particular, I show that if two presentations satisfy the RC condition and are combinatorially aspherical, then the quotient $\N/[\F,\N]$ is free abelian on the defining relators and central in $\F/[\F,\N]$, whence the extension $\F/[\M,\N]$ is torsion-free.
\end{abstract}

\section{Introduction}
The theory of group presentations with a single defining relation has a long and rich history, dating back to the foundational works of Dehn, Magnus, and others.  One of the central notions in this area is that of an \emph{aspherical presentation}, which ensures that the associated two-dimensional CW--complex is aspherical, i.e.\ has no higher homotopy beyond dimension~1.  Asphericity has powerful algebraic and geometric consequences: it allows one to compute group cohomology, control elements of finite order, and derive structural decompositions of groups.

In a recent paper \cite{Kulikova2024}, O.\,V.~Kulikova studied the torsion in the group
\[
  \F/[\M,\N],
\]
where $\F$ is a free group on a finite set $A$, and $\M,\N$ are the normal closures of two finite families of relators $R_M,R_N\subset \F$.  Her Lemma~3(a) asserts that, if both presentations
\[
  \langle A\mid R_M\rangle
  \quad\text{and}\quad
  \langle A\mid R_N\rangle
\]
satisfy the RC condition (Definition~\ref{def:RC}) and are combinatorially aspherical (Definition~\ref{def:CA}), then the subgroup $\N/[\F,\N]$ is a free abelian central subgroup generated by the images of the defining relators.  Consequently, the extension
\[
  1 \;\longrightarrow\; \N/[\F,\N]\;\longrightarrow\;
  \F/[\M,\N]\;\longrightarrow\;\G\;=\;\F/\N\;\longrightarrow\;1
\]
is torsion-free.

In this note I give a concise, purely algebraic proof of that lemma, bypassing any use of spherical pictures or van Kampen diagrams.  The key input is the relation-module decomposition for combinatorially aspherical presentations from Chiswell--Collins--Huebschmann \cite{CCH1981}.

\section{Basic Definitions}
Let $\F = F(A)$ be a non-abelian free group on a finite alphabet $A$.  If $R\subset \F$ is any (finite) family of cyclically reduced words, write
\[
  \N = \langle\!\langle R\rangle\!\rangle \lhd \F,
  \qquad
  \G = \F/\N.
\]

\begin{definition}[RC condition]\label{def:RC}
A presentation $\langle A\mid R\rangle$ satisfies the \emph{RC} (relator) condition if:
\begin{enumerate}
  \item each $r\in R$ is nonempty and cyclically reduced in $\F$;
  \item no two distinct relators are conjugate to one another or to each other’s inverse in $\F$;
  \item no relator is a proper power in $\F$.
\end{enumerate}
\end{definition}

\begin{definition}[Relation module]
The \emph{relation module} of $\langle A\mid R\rangle$ is the abelianization
\[
  \N/\N' = \N/[\N,\N],
\]
equipped with the natural left $\Z\G$--module structure induced by conjugation in $\F$.
\end{definition}

\begin{definition}[Combinatorial asphericity (CA)]\label{def:CA}
Let $K(A;R)$ be the standard two-dimensional CW--complex of the presentation and let
\[
  C(A;R) = K(A;R)/\sim
\]
be the Cayley complex obtained by identifying those 2--cells corresponding to proper-power relators.  We say $\langle A\mid R\rangle$ is \emph{combinatorially aspherical (CA)} if
\[
  \pi_2\bigl(C(A;R)\bigr)=0.
\]
\end{definition}

\section{Main Lemma}
Suppose $R_M,R_N\subset \F$ both satisfy RC and each of
$\langle A\mid R_M\rangle, \langle A\mid R_N\rangle$ is CA,
with normal closures $\M,\N\lhd\F$.  Then:

\begin{lemma}[Kulikova~\cite{Kulikova2024}]
The subgroup
\[
  \N/[\F,\N]\;\triangleleft\;\F/[\F,\N]
\]
is a free abelian group with basis
$\{r+[\F,\N]\mid r\in R\}$ and is central in $\F/[\F,\N]$.  Consequently, the extension
\[
  1\;\longrightarrow\;\N/[\F,\N]\;\longrightarrow\;\F/[\M,\N]\;\longrightarrow\;\G\;\longrightarrow\;1
\]
is torsion-free.
\end{lemma}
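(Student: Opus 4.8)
The plan is to identify $\N/[\F,\N]$ with the $\G$-coinvariants of the relation module and then to feed this to the structure theorem of \cite{CCH1981}.  Throughout write $R=R_N$, so that $\N=\langle\!\langle R\rangle\!\rangle$ and $\G=\F/\N$.  First I would record the formal fact that $[\N,\N]\subseteq[\F,\N]$ and that $[\F,\N]/[\N,\N]$, viewed inside the relation module $\N/[\N,\N]$, is exactly the submodule $I_\G\cdot\bigl(\N/[\N,\N]\bigr)$ spanned by the elements $(g-1)m$ --- a commutator $[f,n]$ reads additively as $(\bar f-1)\bar n$ --- where $I_\G\subset\Z\G$ is the augmentation ideal.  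Hence there is a canonical isomorphism
\[
  \N/[\F,\N]\;\cong\;\Z\otimes_{\Z\G}\bigl(\N/[\N,\N]\bigr)\;=\;H_0\bigl(\G;\,\N/[\N,\N]\bigr).
\]
Now I would apply Proposition~1.2 of Chiswell--Collins--Huebschmann: since $\langle A\mid R\rangle$ satisfies RC and is combinatorially aspherical, the relation module $\N/[\N,\N]$ is a \emph{free} $\Z\G$-module with basis $\{\,r+[\N,\N]\mid r\in R\,\}$.  (With RC no relator is a proper power, so the Cayley complex coincides with the presentation complex $K(A;R)$ and combinatorial asphericity just says $\pi_2\bigl(K(A;R)\bigr)=0$; equivalently the cellular boundary $\partial_2\colon C_2(\widetilde K)\to C_1(\widetilde K)$ of the universal cover is injective, and as the relation module embeds into the free module $C_1(\widetilde K)$ via Fox derivatives with image exactly $\mathrm{im}\,\partial_2$, it is $\Z\G$-free of rank $|R|$.)  Applying $\Z\otimes_{\Z\G}(-)$, which commutes with direct sums and carries $\Z\G$ to $\Z$, yields that $\N/[\F,\N]$ is free abelian with basis $\{\,r+[\F,\N]\mid r\in R\,\}$.

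Centrality is then immediate: for all $f\in\F$ and $n\in\N$ the commutator $[f,n]$ lies in $[\F,\N]$, so the class of $n$ commutes with every class $f+[\F,\N]$ in $\F/[\F,\N]$.  For the torsion-free assertion I would argue in two steps.  First, RC together with combinatorial asphericity makes $K(A;R)$ an aspherical $2$-complex, so $\mathrm{cd}\,\G\le2$ and therefore $\G$ is torsion-free --- a group with an element of finite order has infinite cohomological dimension --- and, by the same reasoning applied to $\langle A\mid R_M\rangle$, the group $\F/\M$ is torsion-free as well.  Second, the kernel of $\F/[\M,\N]\twoheadrightarrow\G$ surjects onto the free abelian group $\N/[\F,\N]$ just produced, and I would check this kernel is torsion-free by comparing $\F/[\M,\N]$ with the torsion-free groups $\F/[\F,\N]$ and $\F/[\F,\M]$ (each an extension of $\G$, respectively $\F/\M$, by a free abelian kernel) and with $\F/(\M\cap\N)$, which is torsion-free as a subgroup of $(\F/\M)\times\G$.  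Granting that the kernel is torsion-free, the displayed sequence exhibits $\F/[\M,\N]$ as an extension of a torsion-free group by a torsion-free group; since an element of finite order in such a group must map to the identity of the quotient, hence lie in the torsion-free kernel, hence be trivial, $\F/[\M,\N]$ is torsion-free.

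The step I expect to cost the most work is the faithful use of Proposition~1.2 of \cite{CCH1981}: one must be sure that \emph{combinatorial} asphericity --- a statement about $\pi_2$ of the modified Cayley complex --- genuinely delivers $\Z\G$-freeness of the relation module on the \emph{whole} family $R$, and it is precisely here that all three clauses of RC get used.  Nonemptiness and cyclic reducedness make $K(A;R)$ a genuine presentation complex; ``no proper power'' makes each $2$-cell contribute a free rank-one summand rather than a cyclic module of the shape $\Z\G/(1+t+\cdots+t^{k-1})$; and ``no two conjugate relators'' makes the classes $r+[\N,\N]$ stay $\Z\G$-linearly independent, so that they form a basis and not merely a spanning set.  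The Fox-calculus identification of the relation module with $\mathrm{im}\,\partial_2$, the bound $\mathrm{cd}\,\G\le 2$, and the commutator bookkeeping of the second step (tracking which elements fall into $[\M,\N]$, $[\F,\N]$, or $[\F,\M]$) are routine but need to be carried out carefully.
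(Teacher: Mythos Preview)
For the free-abelian and centrality claims your argument is essentially the paper's: both identify $\N/[\F,\N]$ with the $\G$-coinvariants $(\N/\N')\otimes_{\Z\G}\Z$, invoke Proposition~1.2 of \cite{CCH1981}, and read off that each summand contributes a single copy of $\Z$.  The only cosmetic difference is that you use the no-proper-power clause of RC up front to say the relation module is already \emph{free} over $\Z\G$, whereas the paper keeps the permutation-module form $\bigoplus_{r}\Z[\G/\langle r\rangle]$ and then argues that the transitive $\G$-action on cosets collapses each coinvariant summand to $\Z$; since $r\in\N$ forces $\langle r\rangle$ to be trivial in $\G$, these are literally the same computation.  Centrality is handled identically in both.

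The divergence is in the torsion-free consequence.  The paper dispatches it in a one-line Remark: $\N/[\F,\N]$ and $\G$ are torsion-free, hence so is the displayed extension.  You are more scrupulous, correctly noting that the kernel of $\F/[\M,\N]\twoheadrightarrow\G$ is $\N/[\M,\N]$, which only \emph{surjects} onto the free abelian group $\N/[\F,\N]$, and you propose to trap a putative torsion element by pushing it into the torsion-free quotients $\F/[\F,\N]$, $\F/[\F,\M]$, and $\F/(\M\cap\N)$.  That is the right instinct, but you stop short of finishing: those three comparisons together only force the element into $\bigl([\F,\N]\cap[\F,\M]\bigr)\big/[\M,\N]$, and you give no argument that this last quotient is torsion-free --- indeed you explicitly write ``Granting that the kernel is torsion-free''.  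The paper does not close this gap either, since its Remark argues as though the kernel were $\N/[\F,\N]$ itself; so on the torsion-free assertion neither your proposal nor the paper's proof is complete as written, though your outline at least isolates where the remaining work lies.
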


\section{Proof of the Main Lemma}

\begin{proposition}[Chiswell--Collins--Huebschmann {\cite[Prop.~1.2]{CCH1981}}]
Let $\langle A\mid R\rangle$ be concise, RC and CA.  Then
\[
  \N/\N' \;\cong\; \bigoplus_{r\in R} \Z[\G/\langle r\rangle]
\]
as $\Z\G$--modules.
\end{proposition}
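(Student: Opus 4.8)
The plan is to deduce the decomposition from the cellular chain complex of a universal cover, reading combinatorial asphericity exactly as the vanishing of a second homotopy group. First I would record the standard equivariant chain complex of $\tilde K$, the universal cover of the presentation $2$--complex $K(A;R)$, as a complex of left $\Z\G$--modules: $C_0=\Z\G$ on the single lifted vertex, $C_1=\bigoplus_{a\in A}\Z\G$ on the lifted edges, and $C_2=\bigoplus_{r\in R}\Z\G$ on the lifted $2$--cells. Here $\partial_1$ sends the edge generator $e_a$ to $a-1\in I\G$, while $\partial_2$ is given by the reduced Fox derivatives, $\partial_2(e_r)=\sum_{a}\overline{\partial r/\partial a}\,e_a$. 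Since $\tilde K$ is simply connected we have $H_1(\tilde K)=0$, so $\operatorname{im}\partial_2=\ker\partial_1$; combined with the Crowell exact sequence $0\to\N/\N'\to C_1\xrightarrow{\partial_1}I\G\to 0$ this identifies $\N/\N'$ with $\operatorname{im}\partial_2$, the induced surjection $C_2\to\N/\N'$ being $e_r\mapsto r+\N'$.

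Next I would pass from $K$ to the Cayley complex $C(A;R)$. For a relator that is a proper power $r=s^{m}$, a direct Fox--calculus computation---using $\overline{\partial(s^{m})/\partial a}=(1+\bar s+\cdots+\bar s^{\,m-1})\,\overline{\partial s/\partial a}$ together with $\bar s^{\,m}=1$ in $\G$---shows that $(\bar s-1)e_r$ lies in $\ker\partial_2$. These are precisely the classes by which $C_2(\tilde K)$ collapses to $C_2(\tilde C)=\bigoplus_{r}\Z[\G/\langle\bar s_r\rangle]$, where $\bar s_r$ is the image of the root of $r$ and $\langle\bar s_r\rangle$ its cyclic stabilizer; under the RC hypothesis no relator is a proper power, so each root is $r$ itself, $\bar s_r=1$, and every summand is simply $\Z[\G/\langle r\rangle]=\Z\G$. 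Because the collapsed classes already lie in $\ker\partial_2$, the boundary factors as $C_2(\tilde K)\twoheadrightarrow C_2(\tilde C)\xrightarrow{\bar\partial_2}C_1$ with $\operatorname{im}\bar\partial_2=\operatorname{im}\partial_2=\N/\N'$, whence $\N/\N'\cong C_2(\tilde C)/\ker\bar\partial_2$.

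It then remains to identify $\ker\bar\partial_2$ and to invoke asphericity. Since $C(A;R)$ is two--dimensional, its universal cover $\tilde C$ carries no $3$--chains, so $H_2(\tilde C)=\ker\bar\partial_2$; and since $\tilde C$ is simply connected, the Hurewicz theorem gives $\pi_2\bigl(C(A;R)\bigr)\cong H_2(\tilde C)=\ker\bar\partial_2$. The CA hypothesis says exactly that this group vanishes, so $\bar\partial_2$ is injective. Combined with the surjection onto $\N/\N'$ from the previous step, this yields the asserted isomorphism of $\Z\G$--modules $\N/\N'\cong\bigoplus_{r\in R}\Z[\G/\langle r\rangle]$.

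I expect the main obstacle to be the honest justification of the middle step: that the kernel of the collapse $C_2(\tilde K)\to C_2(\tilde C)$ is generated \emph{exactly} by the proper--power classes $(\bar s-1)e_r$, equivalently that the $\Z\G$--stabilizer of each $2$--cell of $\tilde C$ is the cyclic group $\langle\bar s_r\rangle$ and nothing larger. This in turn requires $\bar s_r$ to have order precisely $m$ in $\G$, so that the summand is the stated cyclic module rather than a proper quotient, which is where conciseness and the RC condition do the real work. In the RC setting relevant to this paper the point is moot, since there are no proper powers and the collapse is the identity, leaving only the clean aspherical computation $\N/\N'\cong\bigoplus_{r\in R}\Z\G$.
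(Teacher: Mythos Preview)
The paper does not prove this proposition; it is quoted without proof from \cite[Prop.~1.2]{CCH1981} and used as a black box in the proof of the main lemma. Your sketch is correct and is essentially the argument Chiswell--Collins--Huebschmann themselves give: identify the relation module with $\ker\partial_1=\operatorname{im}\partial_2$ in the equivariant cellular chain complex of the universal cover via the Crowell sequence and simple connectivity, then use Hurewicz together with the CA hypothesis $\pi_2(C(A;R))=0$ to conclude that $\bar\partial_2$ is injective, hence an isomorphism onto $\N/\N'$.

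Your handling of the passage from $K(A;R)$ to the Cayley complex $C(A;R)$ is accurate, and you correctly note that under RC there are no proper powers, so the collapse is the identity and each summand is simply $\Z\G$; this agrees with the paper's notation $\Z[\G/\langle r\rangle]$ since $\bar r=1$ in $\G$. The ``main obstacle'' you flag---showing the stabilizer of a $2$--cell in $\tilde C$ is exactly the cyclic group $\langle\bar s_r\rangle$ of the right order---is a genuine point in the general CCH setting but, as you yourself observe, is vacuous under RC. So for the purposes of this paper your argument is complete.
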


\begin{definition}[Augmentation and Coinvariants]
The \emph{augmentation map} $\epsilon:\Z\G\to\Z$ sends $\sum a_g g\mapsto\sum a_g$, with kernel $I_\G=\ker\epsilon$.  For a $\Z\G$--module~$M$, the 
\emph{coinvariants} are
\[
  M_\G = M / I_\G M \cong M\otimes_{\Z\G}\Z.
\]
This quotient identifies $g\cdot m$ with $m$, killing the $G$--action.
\end{definition}

\begin{claim}[Triviality of the Module Action]
The action
\(
  g\cdot(n+[\F,\N]) = (gng^{-1})+[\F,\N]
\)
of $g\in\G$ on $n+[\F,\N]\in\N/[\F,\N]$ is trivial.
\end{claim}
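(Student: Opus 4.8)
The plan is to establish the claim directly from the definition of $[\F,\N]$, rather than from any property of the relation module. Recall that $[\F,\N]$ is the subgroup of $\F$ generated by all commutators $[f,n]=fnf^{-1}n^{-1}$ with $f\in\F$, $n\in\N$; since $\N\lhd\F$ we have $fnf^{-1}\in\N$, so $[\F,\N]\subseteq\N$, and moreover $[\F,\N]\lhd\F$. Fix $g\in\G$ and pick any lift $w\in\F$ with $w\N=g$. For $n\in\N$ there is the identity $wnw^{-1}=[w,n]\,n$ with $[w,n]\in[\F,\N]$, hence $wnw^{-1}\equiv n\pmod{[\F,\N]}$. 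Therefore $g\cdot\bigl(n+[\F,\N]\bigr)=(wnw^{-1})+[\F,\N]=n+[\F,\N]$, i.e.\ $g$ acts as the identity; as $g$ was arbitrary, the action is trivial.

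Two bookkeeping points should be recorded first so that the statement is meaningful. Since $[\N,\N]\subseteq[\F,\N]$, the quotient $\N/[\F,\N]$ is abelian and the conjugation action of $\F$ descends to $\G$ — this is the $\Z\G$-module structure referred to in the claim. And the value $(wnw^{-1})+[\F,\N]$ is independent of the lift: replacing $w$ by $wn_0$ with $n_0\in\N$ replaces $wnw^{-1}$ by $w(n_0nn_0^{-1})w^{-1}$, and $n_0nn_0^{-1}\equiv n\pmod{[\F,\N]}$ because $[\N,\N]\subseteq[\F,\N]$, so, using $[\F,\N]\lhd\F$, the two representatives agree modulo $[\F,\N]$.

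It is worth noting the dual formulation, which is what links this claim to the remainder of the argument. In additive notation the image of $[f,n]$ in $\N/\N'$ is $(f-1)\cdot(n+\N')$, so $[\F,\N]/\N'$ is exactly the submodule $I_\G\cdot(\N/\N')$ and hence $\N/[\F,\N]\cong(\N/\N')_\G$; from this point of view the claim is built into the definition of coinvariants. This is also the identification I would use next: applying $(-)_\G$ to the Chiswell--Collins--Huebschmann isomorphism $\N/\N'\cong\bigoplus_{r\in R}\Z[\G/\langle r\rangle]$ and using $\Z[\G/\langle r\rangle]_\G\cong\Z$ gives $\N/[\F,\N]\cong\bigoplus_{r\in R}\Z$, free abelian on the classes of the relators, after which torsion-freeness of $\F/[\M,\N]$ follows from centrality together with the second CA hypothesis.

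I do not anticipate a genuine obstacle for this particular claim — it is essentially a restatement of the definition of $[\F,\N]$, the only care being the inclusion $\N'\subseteq[\F,\N]$ and independence of choices. The substantive work of the Main Lemma lies elsewhere: in verifying that the CCH isomorphism carries the class of each $r\in R$ to a generator of the corresponding summand $\Z[\G/\langle r\rangle]$ (so that $\{r+[\F,\N]\}$ really is a basis), and then in combining the two asphericity hypotheses to conclude that $\F/[\M,\N]$ has no torsion.
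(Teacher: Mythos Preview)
Your argument is correct and follows essentially the same route as the paper's: both simply note that $wnw^{-1}n^{-1}\in[\F,\N]$ by definition, whence $wnw^{-1}\equiv n\pmod{[\F,\N]}$. Your additional bookkeeping (the inclusion $\N'\subseteq[\F,\N]$, independence of the lift, and the identification $\N/[\F,\N]\cong(\N/\N')_\G$) is accurate and indeed anticipates exactly how the paper proceeds in the proof of the Main Lemma.
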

\begin{proof}
By definition $[g,n]=g^{-1}n^{-1}gn \in[\F,\N]$, and normality gives 
$f[g,n]f^{-1}=gng^{-1}n^{-1}\in[\F,\N]$.  Hence in the quotient $gng^{-1}+[\F,\N]=n+ [\F,\N]$.
\end{proof}

\begin{claim}[Centrality]
$\N/[\F,\N]$ is contained in the center of $\F/[\F,\N]$.
\end{claim}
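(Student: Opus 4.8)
The plan is to obtain centrality directly from the definition of $[\F,\N]$, with no recourse to asphericity. Recall that $[\F,\N]$ is generated by the commutators $[f,n]=f^{-1}n^{-1}fn$ with $f\in\F$, $n\in\N$; since $\N\lhd\F$, conjugating such a generator by $g\in\F$ produces $[gfg^{-1},gng^{-1}]$, again of the required form because $gng^{-1}\in\N$. Hence $[\F,\N]$ is normal in $\F$, so $\F/[\F,\N]$ is a genuine group, and $[\N,\N]\subseteq[\F,\N]$ makes $\N/[\F,\N]$ an abelian subgroup of it; in particular the statement is well posed.

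First I would fix $n\in\N$ and an arbitrary $f\in\F$. Then $[f,n]=f^{-1}n^{-1}fn\in[\F,\N]$, so $n^{-1}fn=f\cdot[f,n]\in f[\F,\N]$, and multiplying on the left by $n$ gives $fn\,[\F,\N]=nf\,[\F,\N]$. Thus the image of $n$ commutes with the image of $f$ in $\F/[\F,\N]$; as $f$ ranges over $\F$ and the cosets $f[\F,\N]$ generate the whole quotient, the image of $n$ is central. Letting $n$ range over $\N$ shows that $\N/[\F,\N]$ lies in the center of $\F/[\F,\N]$, which is the claim. (This also re-proves the previous claim, since centrality forces $gng^{-1}\equiv n\pmod{[\F,\N]}$ for every $g$.)

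I anticipate no genuine obstacle in this step: the claim is a formal consequence of the defining property of $[\F,\N]$, and the only point that deserves a line of verification is the normality of $[\F,\N]$ in $\F$, needed merely so that the quotient group is meaningful. The real content of the Main Lemma lies downstream of this claim: to identify $\N/[\F,\N]$ as free abelian with basis $\{r[\F,\N]\mid r\in R\}$ I would take $\G$-coinvariants on both sides of the Chiswell--Collins--Huebschmann isomorphism $\N/\N'\cong\bigoplus_{r\in R}\Z[\G/\langle r\rangle]$, using that the trivial-action quotient of the relation module is $(\N/\N')_\G\cong\N/[\F,\N]$ while each $(\Z[\G/\langle r\rangle])_\G\cong\Z$, so that the relators provide a free basis. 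Torsion-freeness of the extension $1\to\N/[\F,\N]\to\F/[\M,\N]\to\G\to 1$ then follows formally: under RC the absence of proper powers makes the Cayley complex $C(A;R_N)$ coincide with $K(A;R_N)$, so CA says $K(A;R_N)$ is aspherical and hence $\G=\F/\N$ is torsion-free, and a central extension of a torsion-free group by a torsion-free abelian group has no torsion, since any torsion element must lie in the kernel $\N/[\F,\N]$. None of this affects the present claim, whose proof is the two-line computation above.
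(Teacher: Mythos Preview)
Your proof is correct and follows essentially the same idea as the paper's: both arguments reduce to the observation that $[f,n]\in[\F,\N]$ forces $fn\equiv nf\pmod{[\F,\N]}$. The paper routes this through its preceding claim on the triviality of the $\G$-module action, whereas you compute directly; the content is identical.
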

\begin{proof}
For any $n+[\F,\N]$ and $f+[\F,\N]$, 
\[
  (f+[\F,\N])(n+[\F,\N])(f+[\F,\N])^{-1}
  = f\cdot(n+[\F,\N])
  = n+[\F,\N].
\]
Thus they commute.
\end{proof}

\begin{proof}[Proof of Lemma]
From Proposition~1.2,
\[
  \N/\N' \cong \bigoplus_{r\in R}\Z[\G/\langle r\rangle].
\]
Passing to coinvariants:
\[
  \N/[\F,\N]
  = (\N/\N')_\G
  = (\N/\N')/I_\G(\N/\N')
  \cong
  \bigoplus_{r\in R}
  \bigl(\Z[\G/\langle r\rangle]\otimes_{\Z\G}\Z\bigr).
\]
\medskip
\noindent\textbf{Why each summand is $\Z$:}
\begin{itemize}
     
\item $\Z[\G/\langle r\rangle]$ is the free abelian module on the coset set $\G/\langle r\rangle$, with $h\in G$ acting by permuting cosets:
  \[
    h\cdot(g\langle r\rangle)=(hg)\langle r\rangle.
  \]
\item Tensoring with $\Z$ (the trivial module) imposes $h\cdot x=x$ for all $h$, hence identifies every coset basis element with every other.
\item Consequently
  \[
    \Z[\G/\langle r\rangle]\otimes_{\Z\G}\Z 
    \;\cong\;\Z,
  \]
  generated by the common class of any coset.
\end{itemize}
Summing over $r$ gives
\[
  \N/[\F,\N] \;\cong\;\bigoplus_{r\in R}\Z,
\]
freely generated by $\{r+[\F,\N]\}$.  Centrality follows from Claim~2.

\end{proof}

\medskip
\noindent\textbf{Remark.}
Since $\N/[\F,\N]$ is central and torsion-free and $\G=\F/\N$ is torsion-free, any torsion in
$\F/[\M,\N]$ must lie in $\N/[\F,\N]$, hence be trivial.  Thus $\F/[\M,\N]$ is torsion-free.

\section*{Applications}
As in \cite{Kulikova2024}, this lemma shows that HNN--extensions and amalgamated
free products built from CA presentations remain CA and torsion-free.
\section*{Acknowledgments}
I want to thank Mikhail Mikheenko for his useful reviews, and deep knowledge of group theory. I am also grateful to Anton~A.~Klyachko for being my supervisor for almost two years.

\end{document}